\definecolor{couleurCitations}{rgb}{0,0,0.85}
\definecolor{couleurRef}{rgb}{0.75,0,0}
\newtheorem{theorem}{Theorem}
\newtheorem{lema}{Lemma}
\newtheorem{prop}{Proposition}
\theoremstyle{definition}
\subjclass[2020]{57K10, 57M12}
\title{Hyperbolic knots with a large toroidal surgery}
\author{Mario Eudave-Mu\~noz and Masakazu Teragaito}
\address{Instituto de Matem\'aticas, Universidad Nacional Aut\'onoma de M\'exico, Campus Cuernavaca, Morelos, M\'exico}
\email{mario@matem.unam.mx}
\address{International Institute for Sustainability with Knotted Chiral Meta Matter (WPI-SKCM$^2$), Hiroshima University, 
1-3-1 Kagamiyama, Higashi-Hiroshima, 739--8526, Japan}
\email{teragai@hiroshima-u.ac.jp}
\thanks{The first author has been supported by PAPIIT UNAM grant IN117423. The second author  has been supported by
JSPS KAKENHI Grant Number 20K03587. }
\date{September 2023}
\begin{document}

\maketitle

\begin{abstract}
We show an infinite family of hyperbolic knots that have an exceptional surgery producing a graph manifold containing five disjoint, and non parallel incompressible tori.\end{abstract}

\section{Introduction}

Let $K$ be a hyperbolic knot in the 3-sphere. By the Hyperbolic Dehn Surgery Theorem \cite{Thurston}, it is well known that most surgeries on $K$ produce hyperbolic manifolds, except for a finite number of slopes. Among the non-nyperbolic surgeries are the toroidal surgeries, that is, surgeries producing a toroidal manifold, which is just a manifold containing an incompressible torus. 
It is known that if $p/q$-surgery on $K$ produces a toroidal manifold then $\vert q \vert \leq 2$ \cite{GordonLuecke1} \cite{GordonLuecke2}, and if $\vert q \vert=2$, then all knots having this kind of surgery have been determined \cite{EM1}, \cite{GordonLuecke3}. For the case $\vert q \vert=1$, many examples of knots with this type of surgery are known, see \cite{EM-RL} for a survey on this topic, however it seems difficult to have a determination of all such knots. Most of the known examples produce a manifold containing a unique essential torus, and in many cases the manifold produced is a graph manifold. In \cite{EM-RL} examples are given of knots having a surgery that produce a manifold containing two non-isotopic disjoint incompressible tori. According to Y. Tsutsumi \cite{Tsutsumi}, K. Motegi asked if there exists a universal upper bound for the number of JSJ pieces that a manifold obtained by Dehn-surgery on a hyperbolic knot in $S^3$ can have. 
Another way of formulating this question is asking if there is a universal upper bound on the number of disjoint non-parallel incompressible tori that can appear after performing Dehn surgery on a hyperbolic knot in $S^3$. Any such a torus comes from a punctured torus properly embedded in the knot exterior, which fills up to a closed surface after performing Dehn surgery along the slope given by the punctured torus. Tsutsumi \cite{Tsutsumi} considered the case of once punctured tori lying in a knot exterior, that is, of
Seifert surfaces, and showed that in the exterior of a hyperbolic knot there are at most seven disjoint non-parallel incompressible genus one Seifert surfaces. This bound was improved to at most five Seifert surfaces by L. Valdez-S\'anchez \cite{Valdez}, who also showed an 
infinite family of knots which realize this bound. 

In this note we show two infinite families of hyperbolic knots, the knots in one family have a surgery producing a graph manifold which contains five disjoint non-parallel incompressible tori, and the knots in the other family have a surgery producing a graph manifold which contains four disjoint non-parallel incompressible tori. All of the incompressible tori come from twice punctured tori properly embedded in the knot exterior. The examples are constructed by means of the Montesinos trick \cite{Mon}. 
Recently, R. Aranda-Cuevas, E. Ram\'{\i}rez-Losada and J. Rodr\'{\i}guez-Viorato \cite{ARR} have shown that in a hyperbolic knot exterior there are at most six disjoint, non-parallel, incompressible, nested and properly embedded twice punctured tori.  This result complements our examples, and both give evidence that the optimal bound for the number of tori must be five, considering any kind of torus.

\section{The examples}

The construction of the examples will be done by means of the Montesinos trick. So, we start with a 2-string tangle which has a filling producing a trivial knot and another filling producing a knot or link which contains several disjoint Conway spheres. After taking double branched covers, the resulting knot will have the required properties.

Let $B(\ell,m,n,p,q)$ be the tangle shown in Figure \ref{ovillo}, where each box represents a collection of horizontal crossings given by the integral parameters $\ell,m,n,p,q$, with the convention shown in Figure \ref{ovillo}.
We assume that the parameters satisfy that $\vert \ell \vert \geq 2$, $\vert m \vert \geq 2$, $\vert p \vert \geq 2$, $\vert n \vert \geq 3$ and $\vert q \vert \geq 1$, but $(p,q) \not= \pm (2,1)$.

\begin{figure}
    \centering
        \includegraphics[scale=.5]{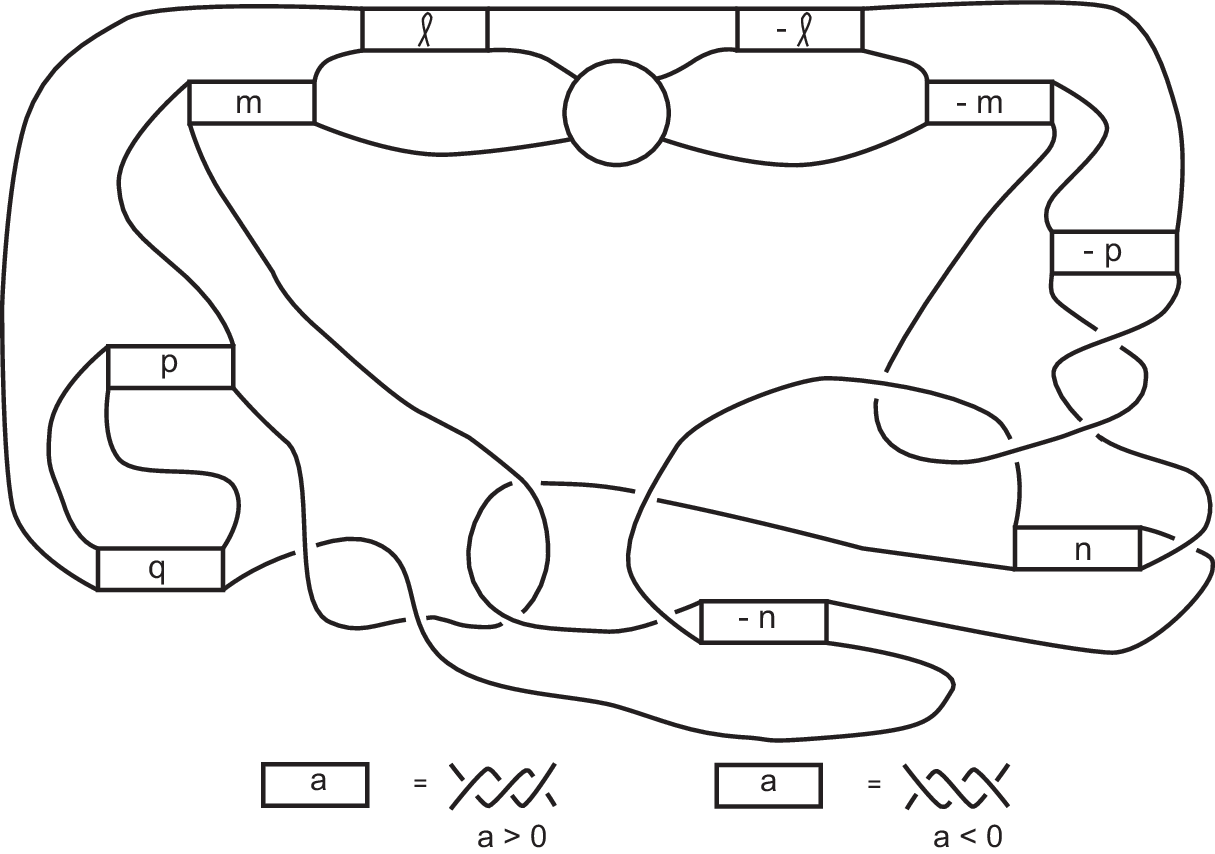}
         \caption{The tangle $B(\ell,m,n,p,q)$}
    \label{ovillo}
        \end{figure}

\begin{theorem} The tangle $B(\ell,m,n,p,q)$ has the following properties:
\begin{enumerate}

    \item $B(\ell,m,n,p,q)\cup R(0)$ is $(S^3,U)$, where $U$ is the trivial knot. 
    \item $B(\ell,m,n,p,q)\cup R(1/0)$ is 
    $(S^3,k)$, where $k$ is a knot or link that has five disjoint and non-isotopic Conway spheres.

\end{enumerate}
Here, $R(0)$ and $R(1/0)$ are ordinary rational tangles with slope $0$ and $1/0$, respectively.
\end{theorem}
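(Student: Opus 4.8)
The plan is to establish both statements by explicit diagram manipulation, as is standard when one sets up the Montesinos trick. For part (1), I would first draw the closed diagram $B(\ell,m,n,p,q)\cup R(0)$ obtained by capping the tangle with the rational $0$-tangle. The $0$-filling joins the endpoints of $B$ in such a way that each of the five twist boxes becomes a closure that can be undone, and I would chase a sequence of Reidemeister I and II moves, together with flypes, to cancel the $\ell$, $m$, $n$, $p$, and $q$ twist regions one after another. The point is that the $0$-tangle closes up the strands so that every twist region slides off regardless of how large the integer parameters are, so the whole diagram collapses to a single unknotted circle $U$. Since no hypothesis on the parameters beyond their being integers is used, the conclusion is uniform across the entire family.

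For part (2), I would draw $B(\ell,m,n,p,q)\cup R(1/0)$, the $\infty$-filling, and exhibit five Conway spheres $S_1,\dots,S_5$, each a $2$-sphere meeting $k$ in four points. The spheres are to be read off directly from the picture: they are positioned so as to cut the diagram into consecutive pieces that separate the twist regions governed by $\ell,m,n,p,q$ into distinct tangles. Disjointness of the $S_i$ is then immediate from the diagram, since they can be drawn as an ordered (nested or linearly arranged) family of round spheres meeting neither one another nor the four branch points of the others.

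The substance of part (2), and the main obstacle, is to show that the five spheres are pairwise non-isotopic; under double branched covers this is exactly the statement that they lift to five disjoint non-parallel incompressible tori, which is what the paper ultimately wants. For this I would check two things for each $S_i$: first, that the tangle $S_i$ bounds on each side is nontrivial, i.e.\ not a rational tangle (equivalently, that its double branched cover is not a solid torus), so that $S_i$ is essential; and second, that the piece of the diagram lying between two consecutive spheres is not the trivial product tangle on four strands, so that no two adjacent spheres are parallel. Both requirements reduce to certifying that the constituent tangles are nontrivial, and this is precisely the role of the standing hypotheses $\vert\ell\vert\ge 2$, $\vert m\vert\ge 2$, $\vert p\vert\ge 2$, $\vert n\vert\ge 3$, $\vert q\vert\ge 1$: each twist region then carries enough crossings to yield a genuinely nontrivial tangle rather than a removable one. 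I expect the exceptional condition $(p,q)\ne\pm(2,1)$ to be included exactly to exclude the single degenerate choice for which the tangle built from the $p$ and $q$ boxes would simplify to a rational tangle, thereby destroying the essentiality of the associated sphere. Once every tangle is certified nontrivial by these inequalities, the five Conway spheres are simultaneously essential and mutually non-parallel, which completes the argument.
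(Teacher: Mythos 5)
Your proposal takes essentially the same route as the paper: item (1) is proved there by exactly the kind of diagrammatic simplification you describe (a sequence of pictures, Figure~2), and item (2) by exhibiting the five spheres directly in the diagram (Figure~3) and observing that none of the tangles in the resulting decomposition (Figure~4) is trivial, which is precisely your non-rational/non-product criterion for essentiality and non-parallelism. Your guess about the role of $(p,q)\neq\pm(2,1)$ is also confirmed by the paper's later Seifert-invariant computation: for those values the coefficient $\frac{2pq-2+q}{pq-1}$ becomes an integer, so the corresponding end piece would be a solid torus, i.e.\ that complementary tangle would be rational and its Conway sphere inessential.
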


\begin{proof} Item (1) follows from the sequence of pictures in Figure \ref{trivialknot}. Item (2) follows from Figure \ref{ConwaySpheres}. The tangles obtained in the decomposition are shown in Figure \ref{pedazosJSJ}. Note that no one is a trivial tangle. \end{proof}

\begin{figure}
    \centering
        \includegraphics[width=12cm]{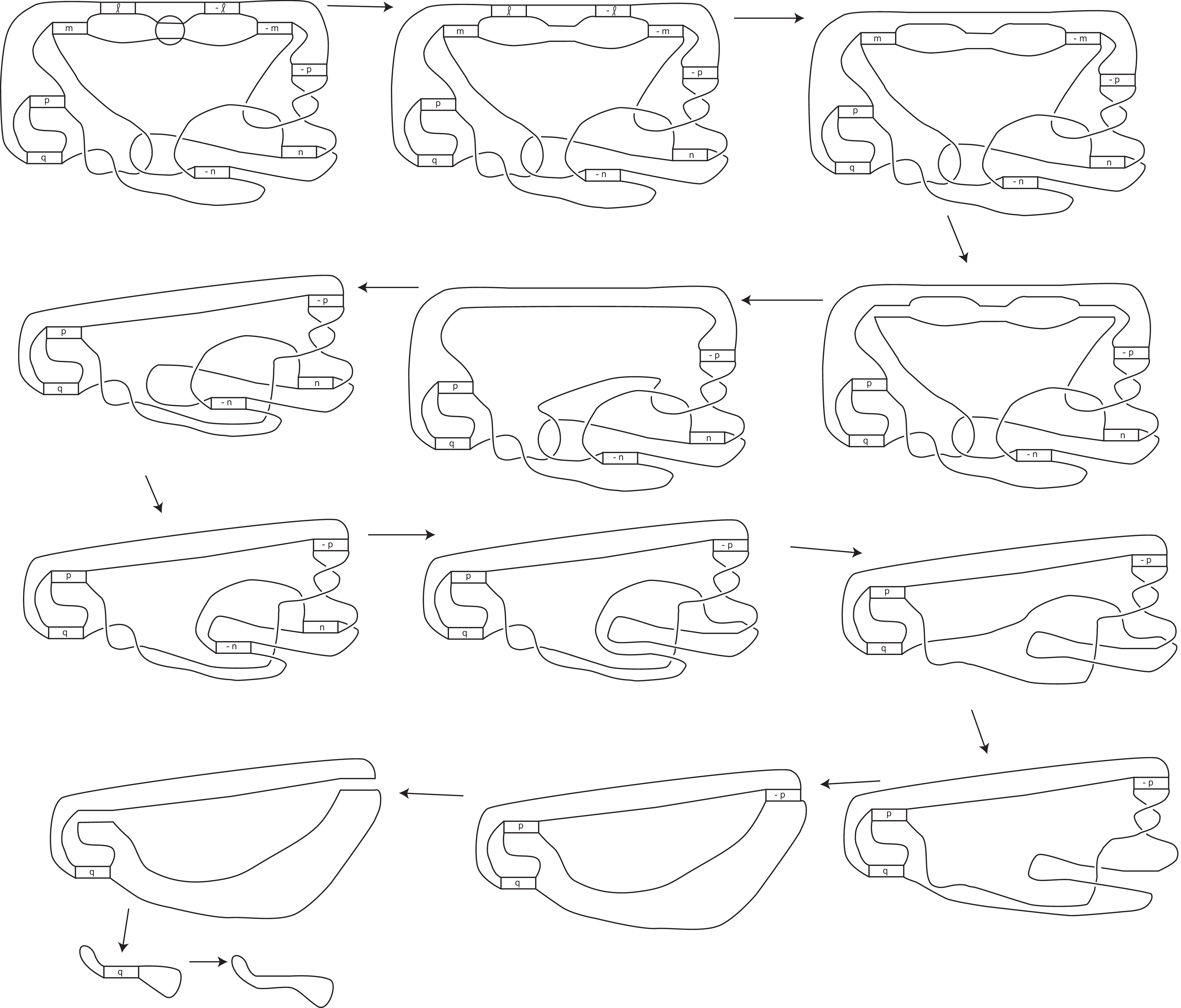}
        \caption{The trivial knot $(S^3,U)=B(\ell,m,n,p,q)\cup R(0)$}
        \label{trivialknot}
        \end{figure}

\begin{figure}
    \centering
        \includegraphics[width=10cm]{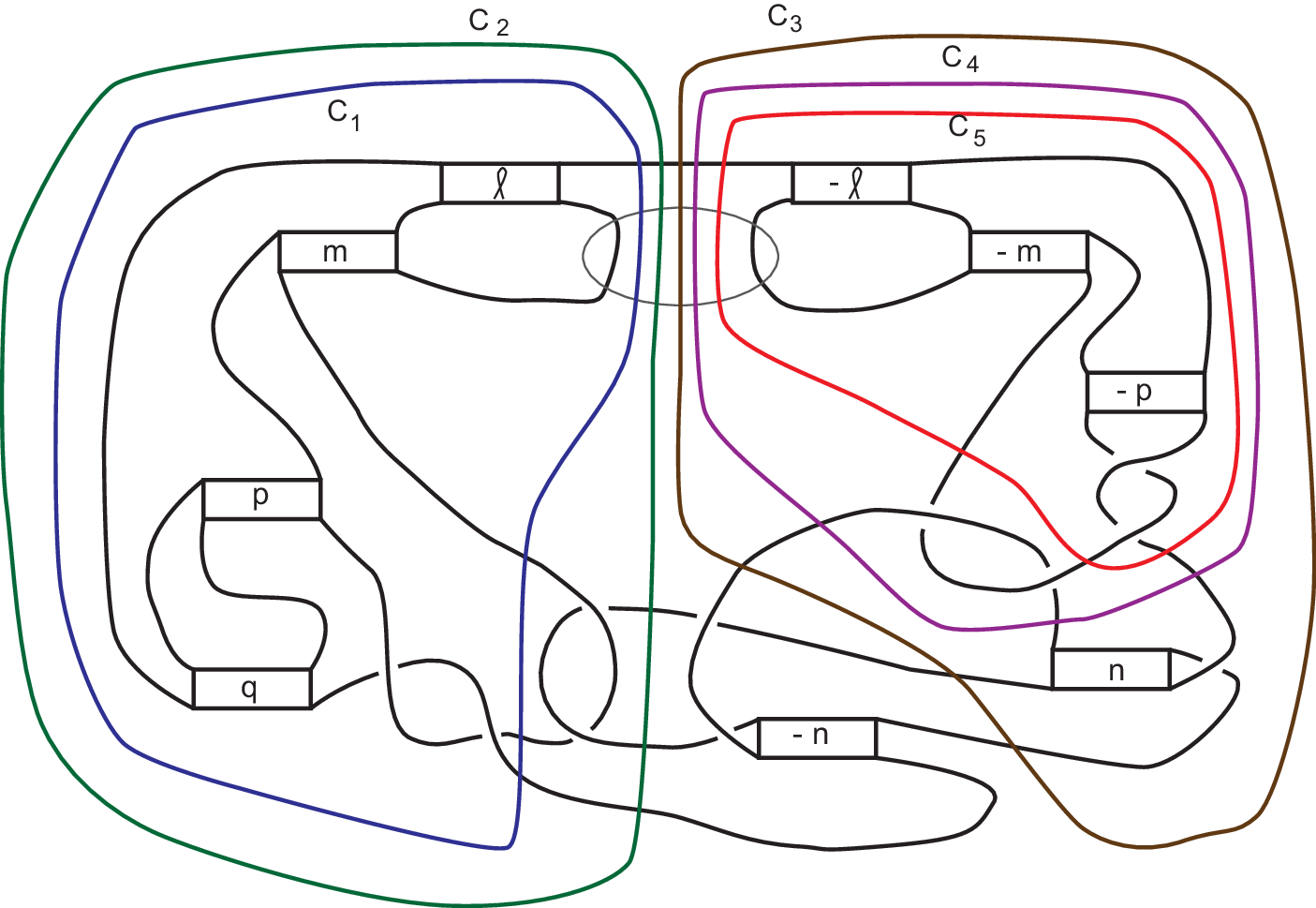}
        \caption{The knot $(S^3,k)=B(\ell,m,n,p,q)\cup R(1/0)$}
        \label{ConwaySpheres}
        \end{figure}

\begin{figure}
    \centering
        \includegraphics[width=12cm]{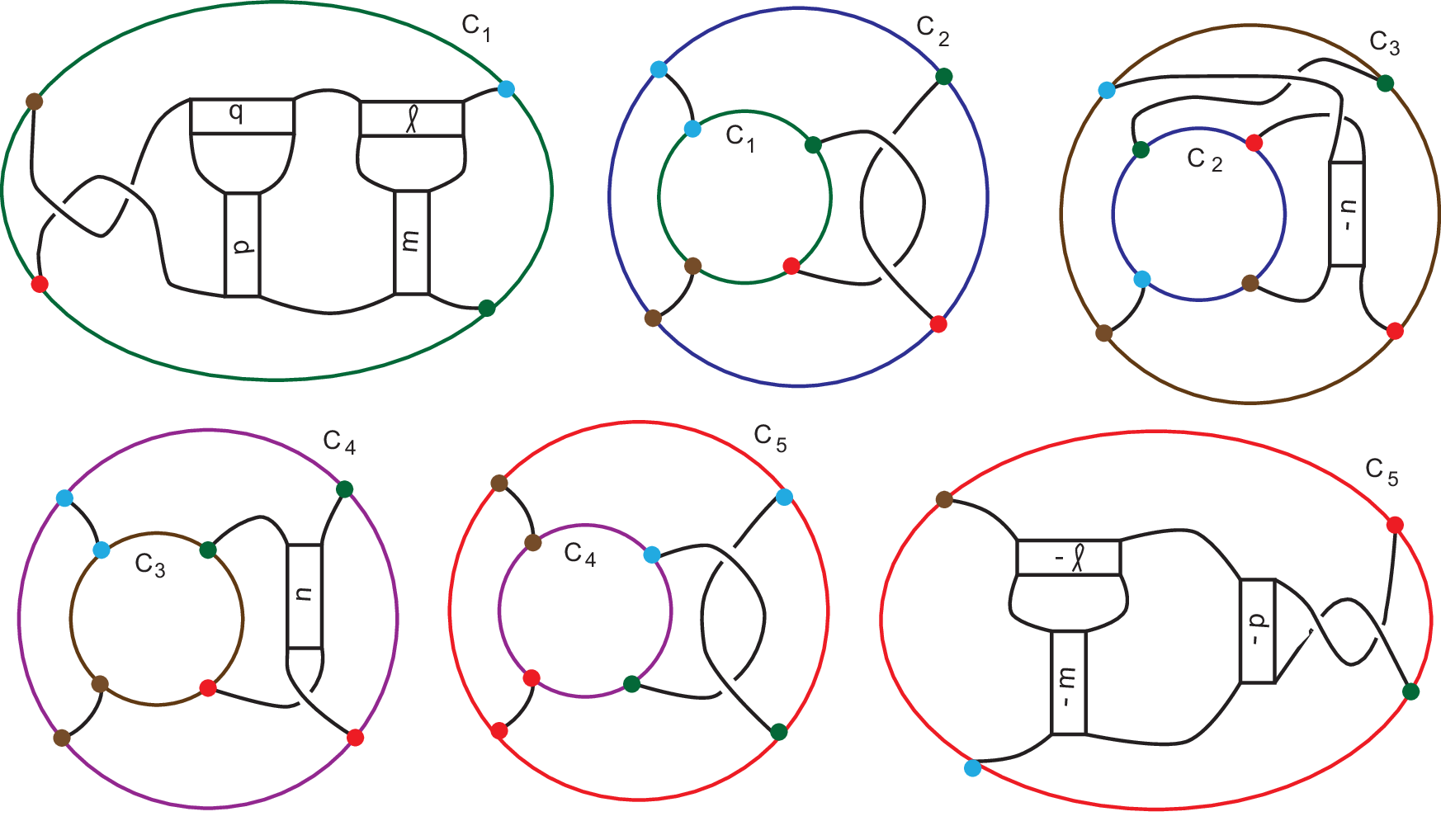}
        \caption{Tangle decomposition of $B(\ell,m,n,p,q)\cup R(1/0)$}
        \label{pedazosJSJ}
        \end{figure}

We just showed that $B(\ell,m,n,p,q)$ is a trivializable tangle, that is, it can be filled with a rational tangle to produce the trivial knot. Let $\pi\colon S^3 \rightarrow S^3$ be the double cover of $S^3$ branched along $U$, which is the 3-sphere because $U$ is the trivial knot. Let $N=\pi^{-1}(R(0))$ be the lift of the tangle $R(0)$, this is a solid torus embedded in $S^3$, and then its core is a knot, called the covering knot of the tangle $B(\ell,m,n,p,q)$, and which is denoted by $\tilde K = \tilde K(\ell,m,n,p,q)$. By the Montesinos trick, the double cover of $S^3$ branched along $k$ is obtained by $r$-Dehn surgery on $\tilde K$ for some integral slope $r$; this manifold is denoted by $\tilde K(r)$. 

Let $C_1$, $C_2$, $C_3$, $C_4$, $C_5$ be the five Conway spheres of the knot or link $k$, as shown in Figure \ref{ConwaySpheres}. Let $T_i=\pi_r^{-1}(C_i)$, $1\leq i\leq 5$,
where $\pi_r\colon \tilde K(r) \rightarrow S^3$ is the double cover of $S^3$ branched along $k$. Each $T_i$ is a separating incompressible torus in $\tilde K(r)$. 

\begin{prop} $\tilde K(r)$ is a graph manifold, determined by a linear graph consisting of 6 vertices and 5 edges, each extremal vertex represents a Seifert fiber space over a disk $D^2$, and the internal vertices represent Seifert fibered spaces over an annulus $A^2$. The Seifert invariants of the pieces are given by $(D^2;{\frac{2pq-2+q}{pq-1}},{\frac{\ell}{m\ell-1}})$, $(A^2;\frac{1}{2})$, $(A^2;\frac{1}{n+1})$, $(A^2;\frac{-1}{n+1})$, $(A^2;\frac{1}{2})$, $(D^2;{\frac{\ell}{1-m\ell},\frac{2p-1}{p}})$.
In particular, $\tilde K(r)$ is irreducible, and any incompressible torus is isotopic to one of $T_i$.
\end{prop}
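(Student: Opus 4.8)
The plan is to analyze the decomposition of $\tilde K(r)$ induced by the five tori $T_1,\dots,T_5$, to identify each of the six complementary pieces as a Seifert fibered space via the Montesinos correspondence, and then to invoke the uniqueness theory for graph manifolds. First I would cut $(S^3,k)$ along the five Conway spheres $C_1,\dots,C_5$. Since the spheres are arranged linearly (Figure \ref{ConwaySpheres}), this produces six tangles arranged in a row: the two extremal pieces are balls branched over tangles, while the four internal pieces sit in product regions $S^2\times I$ between consecutive Conway spheres. Each tangle is the one displayed in Figure \ref{pedazosJSJ}, and by the Theorem none of them is trivial. Applying $\pi_r^{-1}$, the Conway spheres lift to the tori $T_i$ and each tangle piece lifts to a Seifert fibered space: the double branched cover of a ball over a sum-of-rational tangle is an SFS over $D^2$, and the double branched cover of a product region $S^2\times I$ over such a tangle is an SFS over the annulus $A^2$. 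This presents $\tilde K(r)$ as six Seifert pieces glued along $T_1,\dots,T_5$, that is, as a graph manifold on a linear graph with $6$ vertices and $5$ edges, the two extremal vertices fibering over $D^2$ and the four internal ones over $A^2$.

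Next I would compute the Seifert invariants from the rational tangle slopes appearing in Figure \ref{pedazosJSJ}, using the standard dictionary under which a rational tangle of slope $a/b$ lifts to an exceptional fiber recorded by $a/b$ up to the usual normalization. Reading off the fractions in the end tangles gives the two exceptional fibers $\frac{2pq-2+q}{pq-1}$, $\frac{\ell}{m\ell-1}$ on one extremal $D^2$-piece and $\frac{\ell}{1-m\ell}$, $\frac{2p-1}{p}$ on the other, while the clasps and the $n$-twist regions of the internal tangles produce the single fibers $\frac12$, $\frac{1}{n+1}$, $\frac{-1}{n+1}$, $\frac12$. This continued-fraction bookkeeping, together with care about how the framing of the Montesinos surgery slope $r$ interacts with the fiber slopes across each $T_i$, is the computational heart of the argument.

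Finally, irreducibility and the uniqueness of incompressible tori follow from Seifert and graph-manifold theory once the pieces are identified. Each piece is an SFS over $D^2$ with two exceptional fibers or over $A^2$ with one exceptional fiber, none of which is a solid torus or $T^2\times I$ under the stated parameter restrictions ($\vert n\vert\ge 3$, $(p,q)\neq\pm(2,1)$, and the others), which guarantee that every listed fiber has order at least $2$. Hence every piece is irreducible with incompressible boundary, so $\tilde K(r)$ is irreducible and each $T_i$ is incompressible. Within any such piece every incompressible closed torus is boundary-parallel, since the base surface carries no essential simple closed curve whose vertical preimage fails to bound a solid torus, and a surface with boundary admits no closed horizontal surface; thus any incompressible torus in $\tilde K(r)$ can be isotoped into a single piece and is therefore isotopic to some $T_i$.

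The main obstacle I anticipate is this last structural point. To conclude that the graph genuinely has six distinct Seifert vertices, one must verify that the Seifert fibers coming from the two sides of each $T_i$ are not isotopic on $T_i$; otherwise two adjacent pieces would amalgamate into a single SFS and the decomposition would collapse to fewer JSJ tori. Checking this fiber mismatch on all five tori is precisely where the numerical hypotheses on $\ell,m,n,p,q$ are used, and it is the step I expect to require the most delicate verification.
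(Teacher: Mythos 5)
Your proposal is correct and follows essentially the same route as the paper: the authors likewise read the decomposition off the Conway spheres and the tangle pieces of Figure \ref{pedazosJSJ}, compute the Seifert invariants from the rational tangles, and note that the Seifert fibrations of adjacent pieces do not match, so the tori give the JSJ decomposition. The paper's proof is far terser (it cites the figures and the sign convention of \cite{HM} and stops there), and the fiber-mismatch check you rightly single out as the delicate step is exactly the point the authors dispose of by inspection of Figure \ref{pedazosJSJ}.
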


\begin{proof} This follows directly from Figures \ref{ConwaySpheres} and \ref{pedazosJSJ}, just calculating the rational tangles given in each of the tangles.
(Our convention has the opposite sign of \cite{HM}.)
In fact, this gives the JSJ decomposition of $\tilde K(r)$. We can see from Figure \ref{pedazosJSJ} that the Seifert fibrations of adjacent pieces do not match.
\end{proof}

Let $k_r$ be the core of the solid torus $\pi_r^{-1}(R(1/0))$. This is a knot in the manifold $\tilde K(r)$, and it is the lift of an arc in $R(1/0)$ connecting the two strands of the tangle. Note that $k_r$ intersects each torus $T_i$ in exactly two points.

\begin{lema} \label{intersecting} The knot $k_r$ cannot be made disjoint from any of the tori $T_i$. \end{lema}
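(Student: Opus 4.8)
The plan is to argue by contradiction, reducing the statement to a claim about whether the two points of $k_r\cap T_i$ can be cancelled. Write $M=\tilde K(r)\setminus \mathrm{int}\,N(k_r)$ for the exterior of $k_r$. By the construction and part (1) of the Theorem, $M$ is exactly the exterior of $\tilde K$ in $S^3$, and there is a slope $\rho$ on $\partial N(k_r)$ — the one coming from filling with $R(0)$ rather than $R(1/0)$ — for which $M(\rho)=S^3$ (this uses only that $U$ is trivial, so that the double branched cover in question is $S^3$). Since $T_i$ is separating and meets $k_r$ in two points, it cuts $\tilde K(r)$ into two pieces $W,W'$ and cuts $k_r$ into two arcs $\alpha=k_r\cap W$ and $\beta=k_r\cap W'$. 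I would first record the standard reduction: $k_r$ can be isotoped off $T_i$ if and only if one of $\alpha,\beta$ is boundary-parallel in its piece, equivalently the twice-punctured torus $\hat T_i=T_i\setminus \mathrm{int}\,N(k_r)$ is boundary-compressible in $M$. So it suffices to rule out such a boundary compression.

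Suppose then that $k_r$ could be pushed, say, into $W'$. Performing the $\rho$-surgery on $k_r$ (which recovers $S^3$) is supported near $k_r$, hence inside $W'$; it leaves $W$ untouched and replaces $W'$ by some $W''$, giving $S^3=W\cup_{T_i}W''$. Now $T_i$ is incompressible in $\tilde K(r)$, hence incompressible from the $W$-side, and $W$ is \emph{not} a solid torus: by the Proposition each extremal piece is Seifert fibered over a disk with two genuine exceptional fibers (this is where $|\ell|,|m|,|p|\ge 2$, $|n|\ge 3$ and $(p,q)\ne\pm(2,1)$ enter, guaranteeing that the Seifert fractions are non-integral) and each internal piece is a cable space over an annulus, so every side of every $T_i$ contains an extremal piece and has incompressible boundary. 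Since a torus in $S^3$ bounds a solid torus on at least one side and $W$ does not, $W''$ must be a solid torus. Thus $W'=V_i\cup\dots\cup V_5$, a nontrivial graph manifold with incompressible boundary $T_i$, would be obtained by a single surgery on the knot $k_r\subset W''$ inside a solid torus.

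The main work — and the expected obstacle — is to show this last configuration is impossible: no knot in a solid torus has a surgery turning it into the specific graph manifold $V_i\cup\dots\cup V_5$ while fixing the incompressible boundary $T_i$. I would extract the contradiction from the explicit Seifert data of the Proposition, controlling the winding number of $k_r$ in $W''$ together with the slopes that the essential tori $T_{i+1},\dots,T_5$ of $W'$ impose; the non-matching of adjacent fibrations (noted in the Proposition) is precisely what prevents the two fillings from collapsing a genuine graph manifold to a solid torus. The symmetric case, pushing $k_r$ into $W$, is handled identically, since both sides of $T_i$ are non-solid-torus graph manifolds.

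A useful organizing device for this last step is the downstairs branched-cover picture. The two points of $k_r\cap T_i$ correspond to a single transverse intersection of the connecting arc $a$ of $R(1/0)$ with the Conway sphere $C_i$ in Figure \ref{ConwaySpheres}, and cancelling them corresponds to isotoping $a$ off $C_i$; since the decomposition pieces in Figure \ref{pedazosJSJ} are all nontrivial tangles, $C_i$ is an essential Conway sphere and $a$ meets it essentially, which is the concrete reason the cancellation cannot occur. To upgrade this from an equivariant statement to the arbitrary isotopies allowed in the Lemma, I would invoke the irreducibility of $\tilde K(r)$ established in the Proposition.
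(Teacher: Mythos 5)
Your use of the Montesinos trick is set up correctly as far as it goes: if $k_r$ could be isotoped into one side $W'$ of $T_i$, then $\rho$-filling the exterior of $k_r$ yields $S^3=W\cup_{T_i}W''$, and since $W$ is not a solid torus, Alexander's theorem forces $W''$ to be one. (Your preliminary ``standard reduction'' to boundary-compressibility is not needed for this and is itself not as standard as you claim; mere disjointness after isotopy is all you use.) But at this point you have not reached a contradiction, and the step you defer as ``the main work'' is in fact the entire content of the lemma --- moreover, the claim you propose to prove is false as stated. There is no general obstruction to a knot in a solid torus having a surgery that produces a graph manifold with incompressible boundary: satellite patterns (cables of cables in a solid torus, say) do this routinely. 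What your configuration actually forces, and what you would have to refute, is that the \emph{untouched} side $W=H_1\cup\dots\cup H_i$ is a knot exterior in $S^3$ (the exterior of the core of $W''$). Since $W$ is a graph manifold whose JSJ is a linear chain consisting of one Seifert piece over $D^2$ with two exceptional fibers together with cable spaces, its gross structure exactly matches that of an iterated torus knot exterior; so neither the ``non-matching of adjacent fibrations'' nor winding-number considerations can rule it out. A completion would require computing with the explicit Seifert invariants of the Proposition and showing they violate the arithmetic characterization of (iterated) torus knot exteriors under the parameter constraints --- e.g.\ for $i=1$, that a condition of the form $(2pq-2+q)(m\ell-1)+\ell(pq-1)=\pm 1$ never holds when $|\ell|,|m|,|p|\ge 2$, $|q|\ge 1$, $(p,q)\ne\pm(2,1)$. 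Nothing of this sort appears in your proposal. Your closing paragraph is circular: saying the arc $a$ ``meets $C_i$ essentially'' because the tangle pieces of Figure \ref{pedazosJSJ} are nontrivial is a downstairs restatement of the lemma, not a proof of it, and invoking irreducibility does not by itself upgrade such a statement to one about arbitrary isotopies.

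For comparison, the paper argues entirely inside $\tilde K(r)$ with JSJ machinery: it first reduces to the extremal tori $T_1,T_5$, observes that the arcs of $k_r$ in the JSJ pieces are either spanning arcs joining distinct boundary tori or unknotting tunnels of the extremal pieces, and then runs the product-region argument of \cite{EM2}: a torus $T_1'$ isotopic to $T_1$ and disjoint from $k_r$ meets the $T_i$ in circles, every resulting product region must be free of arcs of $k_r$, so all intersections can be removed, and finally $T_1'$ would cobound a product region with $T_1$ that an arc of $k_r$ would have to enter --- a contradiction. Your Alexander-theorem reduction is a genuinely different and potentially workable route, but as submitted it stops exactly where the difficulty begins.
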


\begin{proof} Note that $k_r$ is the lift of an arc in $R(1/0)$ connecting the two strands of the tangle (an arc with no crossing in the diagram of $R(1/0)$). Suppose that $k_r$ can be isotoped to be disjoint from one of the tori, then it must be disjoint from one of the tori, $T_1$ and $T_5$, in the ends, say from $T_1$. Let $H_1$, $H_2$, $H_3$, $H_4$, $H_5$, $H_6$ be the pieces in the $JSJ$ decomposition of $\tilde K(r)$, where $H_1$ is bounded by $T_1$, $H_i$ is bounded by $T_{i-1}$ and $T_i$, $i=2,3,4,5$, and $H_6$ is bounded by $T_5$. Note that the intersection of $k_r$ with each of $H_i$, for $i=2,3,4,5$, consists of two arcs joining different boundary components of $H_i$, and that the intersection of $k_r$ with $H_1$ (and $H_6$) consists of an arc which is an unknotting tunnel for $H_1$ ($H_6$). Suppose then that $T_1$ can be isotoped to a torus $T_1'$ disjoint from $k_r$. We proceed as in \cite[Section 1]{EM2}. $T_1'$ intersects the tori $T_i$ in a collection of simple closed curves. There must be a product region, cobounded by a subsurface of $T_1'$ and one of $T_i$. An arc of $k_r$ cannot lie in this product region, for these are arcs connect two different tori or are unknotting tunnels of a region. Then the torus $T_1'$ can be isotoped to eliminate some intersections. Repeating the operation we get that $T_1'$ is disjoint from the tori $T_i$, and it must cobound a product region with $T_1$, but an arc of $k_r$ cannot lie in the product region. Then $k_r$ cannot be made disjoint from any of the tori $T_i$.
\end{proof}

\begin{theorem} \label{main}
\begin{enumerate}
\item $\tilde K(\ell,m,n,p,q)$ is a hyperbolic knot. 

\item For a certain slope $r$, 
$\tilde K(\ell,m,n,p,q)(r)$  contains five disjoint non-isotopic incompressible tori.

\end{enumerate}
\end{theorem}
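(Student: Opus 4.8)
The plan is to separate the two items. Item (2) is little more than a restatement of the Proposition: the slope $r$ is the integral slope supplied by the Montesinos trick, so that $\tilde K(\ell,m,n,p,q)(r)$ is exactly the manifold $\tilde K(r)$ analysed above. By the Proposition its JSJ decomposition consists of the six pieces glued along $T_1,\dots,T_5$, with non-matching fibrations across each $T_i$; hence the $T_i$ are precisely the JSJ tori, so they are disjoint, incompressible, and pairwise non-isotopic. Thus (2) follows at once, and the entire content of the statement lies in the hyperbolicity claim (1).

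For (1) I would exploit the common exterior. The double branched cover of the tangle ball $B(\ell,m,n,p,q)$ is a compact manifold $X$ with a single torus boundary, and both branched covers are Dehn fillings of $X$: filling the slope $\gamma_0$ determined by $R(0)$ recovers $(S^3,\tilde K)$, while filling the slope $\gamma_\infty$ determined by $R(1/0)$ recovers $\tilde K(r)$ with dual core $k_r$. Hence $X$ is at once the exterior of $\tilde K$ in $S^3$ and the exterior of $k_r$ in $\tilde K(r)$. I would prove $\tilde K$ hyperbolic by proving $X$ hyperbolic, and for this, by Thurston's hyperbolization theorem for Haken manifolds \cite{Thurston}, it suffices to verify that $X$ is irreducible with incompressible boundary, atoroidal, and not Seifert fibered.

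Three of these are quick. Irreducibility holds for any knot exterior. Incompressibility of $\partial X$ is the assertion that $\tilde K$ is non-trivial: if it were trivial, every surgery on it would be a lens space, $S^3$, or $S^2\times S^1$, none of which contains an essential torus, contradicting the Proposition. Similarly $X$ is not Seifert fibered, for otherwise $\tilde K$ would be a torus knot; but every surgery on a torus knot is reducible or a Seifert fibered space over $S^2$ with at most three exceptional fibers (Moser), and such manifolds contain no five disjoint non-parallel essential tori.

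The remaining condition, atoroidality, is the heart of the argument and the step I expect to resist. Let $S\subset X$ be an essential torus, viewed inside $\tilde K(r)$ as a torus disjoint from $k_r$. If $S$ stays incompressible in the closed irreducible manifold $\tilde K(r)$, then by the Proposition it is isotopic there to some $T_i$; carrying $T_i$ along that ambient isotopy would make it disjoint from $k_r$, contradicting Lemma \ref{intersecting}. The delicate possibility is that $S$ compresses after the filling: then every compressing disk meets $k_r$ (because $S$ is incompressible in $X$), and one is forced into the intersection/planar-surface analysis of \cite[Section 1]{EM2}. I expect this to show that a compression surviving only after the $\gamma_\infty$-filling forces a cabling of $\tilde K$ with $\gamma_\infty$ as cabling slope, so that $\tilde K(r)$ would be reducible or a small Seifert fibered space---once more incompatible with the five-torus graph manifold of the Proposition. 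Eliminating this compressible case cleanly is where the real care is required. With atoroidality in hand, Thurston's theorem yields that $X$, and therefore $\tilde K$, is hyperbolic, completing (1).
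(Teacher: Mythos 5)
Much of your outline coincides with the paper's proof: item (2) is indeed just a restatement of the Proposition, the torus-knot exclusion is the same, and your incompressible-after-filling case (an essential torus surviving surgery would be isotopic to some $T_i$, contradicting Lemma \ref{intersecting} since it is disjoint from $k_r$) is exactly the paper's argument. The genuine gap is in the case you yourself flagged, where the essential torus $S$ in the exterior of $\tilde K$ compresses in $\tilde K(r)$, and the route you propose for closing it would fail. Compression of $S$ after filling does \emph{not} force $\tilde K$ to be cabled with the surgery slope as cabling slope. The correct dichotomy, which is what the paper invokes from \cite{Gabai} and \cite{Sch} applied to the solid torus $V$ bounded by $S$ and containing $\tilde K$, is: either the surgered solid torus $V'$ is reducible (the cabled case, which your argument covers, and which contradicts the irreducibility of $\tilde K(r)$ given by the Proposition), or $V'$ is again a solid torus and $\tilde K$ is a $0$- or $1$-bridge braid in $V$ of winding number $\omega \ge 2$. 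This second branch is the main one, and it is not incompatible with anything you have said so far: it is perfectly consistent with $\tilde K(r)$ being an irreducible toroidal graph manifold, so no contradiction with the Proposition arises at this level, and no ``reducible or small Seifert fibered'' conclusion is available.

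To kill that branch the paper needs three further ingredients that are absent from your sketch: (i) choose $S$ (equivalently $V$) outermost, so that the exterior of $V$ contains no further incompressible torus and the core $c$ of $V$ is either a torus knot or hyperbolic; (ii) since $V'$ is a solid torus, $\tilde K(r)$ is also obtained by Dehn surgery on $c$, and by \cite{Gordon} the induced slope is $r/\omega^2$, a slope whose denominator is $\omega^2 \ge 4$; (iii) $c$ is not a torus knot (by the same argument used for $\tilde K$: it has a surgery yielding a manifold with separating incompressible tori), hence $c$ is hyperbolic, and then the Gordon--Luecke bound \cite{GordonLuecke1}, that toroidal surgeries on hyperbolic knots have denominator at most $2$, gives the contradiction. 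Without the winding-number slope computation and this denominator bound, your atoroidality argument cannot be completed; note also that your appeal to \cite[Section 1]{EM2} is misplaced here, since in the paper that intersection analysis is used only to prove Lemma \ref{intersecting}, not to handle the compressible case.
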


\begin{proof} First note that $\tilde K$ is not a torus knot, for it has a surgery producing manifolds which contain separating incompressible tori. Suppose that $\tilde K$ is a satellite knot, and let $T$ be an incompressible non-boundary parallel torus in the exterior of $\tilde K$. Suppose that $T$ bounds a solid torus $V$ in which $\tilde K$ lies. We can also suppose that the exterior of $V$ does not contain any other incompressible torus. If $T$ remains incompressible after $r$-Dehn surgery, then it cannot be isotopic to one of the tori $T_i$, for these tori can not be made disjoint from the core of the surgered solid torus by Lemma \ref{intersecting}. Then $T$ must compress in $\tilde K(r)$. By \cite{Gabai,Sch}, $\tilde K$ is a 0- or 1-bridge braid in $V$ of winding number $\omega \geq 2$, and after $r$-Dehn surgery on $\tilde K$, $V$ becomes a solid torus $V'$. That shows that $\tilde K(r)$ is also obtained by surgery along the core of $V$.
In particular, this core is not a torus knot again.
By \cite{Gordon}, it is obtained by $r/\omega^2$-surgery on $V$, but as $\omega^2 \geq 4$ and the core of $V$ is an hyperbolic knot, this is not possible by \cite{GordonLuecke1}. \end{proof}

Let $Q(a,b,c,d,e,f)$ be the tangle shown in Figure \ref{tangle}, where each box represents a collection of horizontal crossings given by the integral parameters $a,b,c,d,e,f$, with the same convention as in Figure \ref{ovillo}. We assume that the parameters satisfy that $\vert a \vert \geq 2$, $\vert b \vert \geq 1$, $\vert c \vert \geq 2$, $\vert d \vert \geq 3$, $\vert e \vert \geq 2 $, and $\vert f \vert \geq 1$, but $(a,b),(e,f)\ne \pm (2,1)$.

\begin{figure}
    \centering
        \includegraphics[scale=.5]{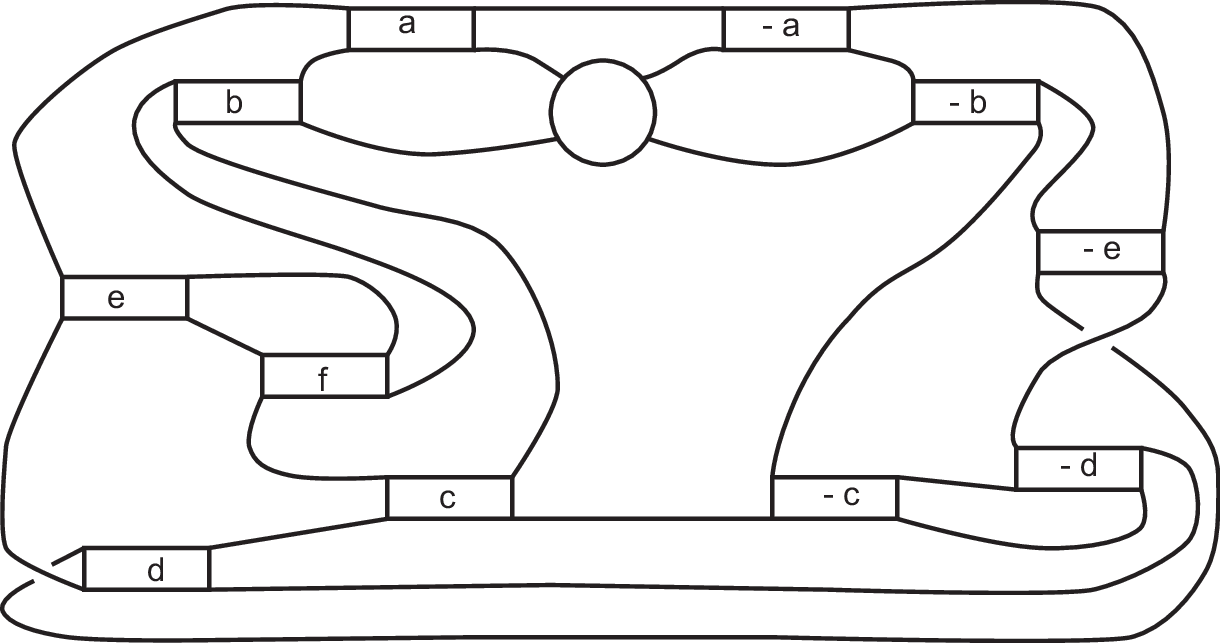}
         \caption{The tangle $Q(a,b,c,d,e,f)$}
    \label{tangle}
        \end{figure}

 \begin{theorem} The tangle $Q(a,b,c,d,e,f)$ has the following properties:
\begin{enumerate}

    \item $Q(a,b,c,d,e,f)\cup R(0)=(S^3,U)$, where $U$ is the trivial knot.
    \item $Q(a,b,c,d,e,f)\cup R(1/0)$ gives a knot or link that has four disjoint and non-isotopic Conway spheres, denoted by $S_1$, $S_2$, $S_3$, $S_4$.
  
\end{enumerate}
\end{theorem}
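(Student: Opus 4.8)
The plan is to mirror the argument given for $B(\ell,m,n,p,q)$, since $Q(a,b,c,d,e,f)$ is assembled from the same diagrammatic primitives---boxes of horizontal crossings joined in a fixed pattern---and the two fillings $R(0)$ and $R(1/0)$ play the same roles of trivializing closure and Conway-sphere closure.

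For item~(1), I would produce an explicit sequence of diagram moves, analogous to those collected in Figure~\ref{trivialknot}, showing that capping $Q(a,b,c,d,e,f)$ with $R(0)$ reduces to the trivial knot. The filling $R(0)$ joins the tangle endpoints in the way that short-circuits the crossing boxes, so that each box can be absorbed by a sequence of Reidemeister and flype moves and the parameters $a,b,c,d,e,f$ cancel in succession until a single unknotted circle remains. The only point to check is that this collapse is uniform across the stated integral ranges, so that the whole family is trivialized at once.

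For item~(2), I would cap $Q(a,b,c,d,e,f)$ with $R(1/0)$ and locate four round $2$-spheres $S_1,S_2,S_3,S_4$, each meeting the resulting link transversally in four points, as in the analog of Figure~\ref{ConwaySpheres}. These cut the diagram into five tangle pieces, which I would read off explicitly as in Figure~\ref{pedazosJSJ}, each a rational or Montesinos tangle whose fraction is determined by the neighbouring parameters. The key observation is that two consecutive Conway spheres are parallel exactly when the piece between them is a trivial (product) tangle, so the $S_i$ are essential and pairwise non-isotopic as soon as none of the five pieces is trivial. This is precisely where the hypotheses enter: the bounds $|a|\ge 2$, $|c|\ge 2$, $|d|\ge 3$, $|e|\ge 2$ prevent the interior pieces from degenerating, while the exclusions $(a,b),(e,f)\ne\pm(2,1)$ are exactly what keeps the two corner pieces nontrivial, in direct parallel with the condition $(p,q)\ne\pm(2,1)$ used for $B$.

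The main obstacle is this non-triviality check in item~(2): one must compute the five tangle fractions carefully and confirm that each corner piece remains nontrivial even in its borderline parameter cases, which is where the numerical exclusions become indispensable. It is cleanest to settle this by passing to the double branched cover and computing the Seifert invariants of the corresponding pieces, exactly as for $B(\ell,m,n,p,q)$, verifying along the way that adjacent Seifert fibrations do not match. Item~(1), by contrast, should follow routinely once the untwisting sequence is drawn.
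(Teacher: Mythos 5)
Your proposal is correct and follows essentially the same route as the paper: item (1) is proved by an explicit untwisting sequence of diagrams, and item (2) by exhibiting the four Conway spheres in the capped-off diagram and observing that none of the complementary tangle pieces is trivial, which is exactly where the parameter bounds and the exclusions $(a,b),(e,f)\ne\pm(2,1)$ enter. Your supplementary check via Seifert invariants in the double branched cover is precisely what the paper carries out separately in its later theorem on $\bar K(a,b,c,d,e,f)(s)$, so it complements rather than diverges from the paper's argument.
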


\begin{proof} 
The proof of (1) is a sequence of figures.
Item (2) follows from Figure \ref{ConwaySpheres2}. \end{proof}

\begin{figure}
    \centering
        \includegraphics[width=10cm]{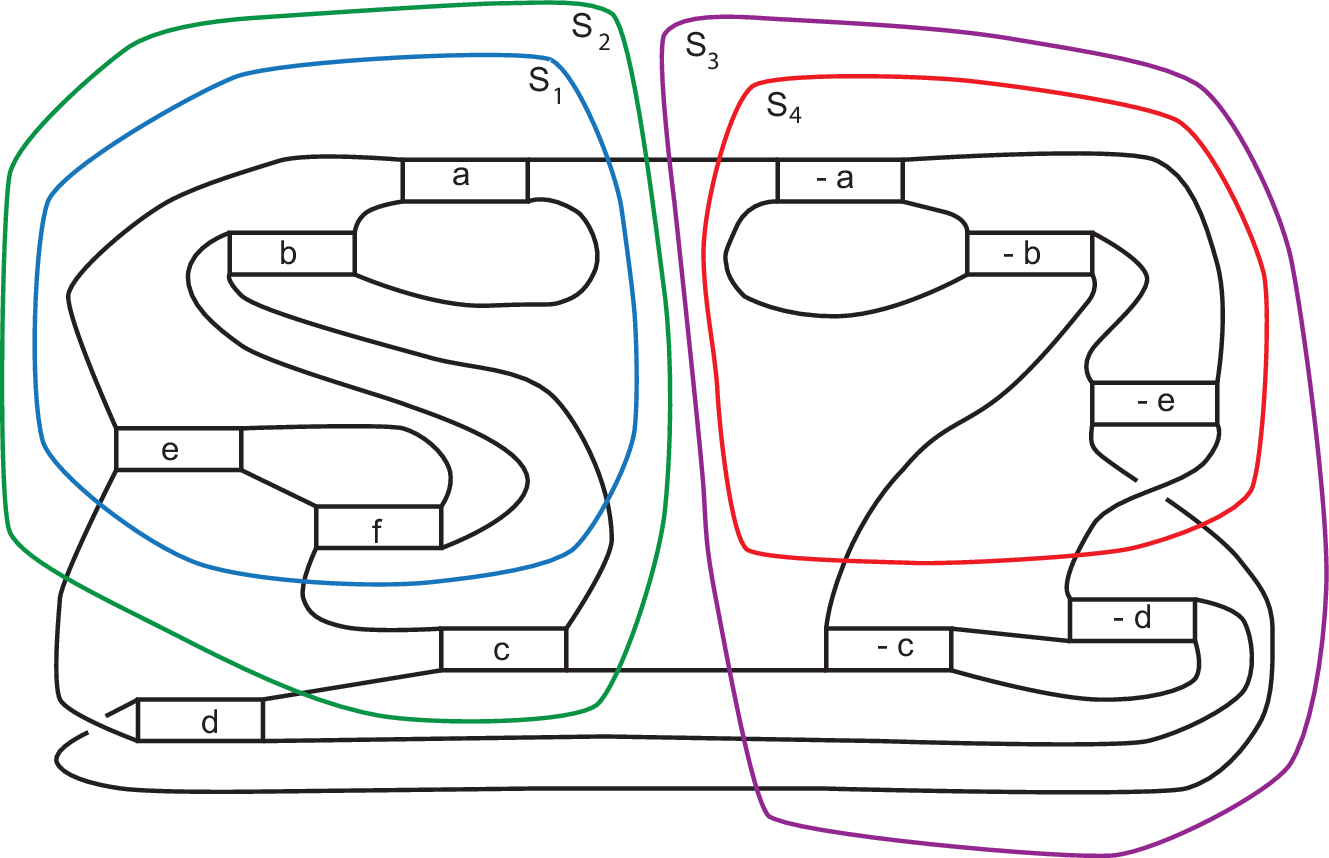}
        \caption{The knot $B(\ell,m,n,p,q)\cup R(1/0)$}
        \label{ConwaySpheres2}
        \end{figure}

Let ${\bar K}(a,b,c,d,e,f)$ be the covering knot associated to the tangle $Q(a,b,c,d,e,f)$.

\begin{theorem} 
\begin{enumerate}
\item $\bar K(a,b,c,d,e,f)$ is a hyperbolic knot. 

\item For a certain integral slope $s$, 
$\bar K(a,b,c,d,e,f)(s)$ contains four disjoint non-isotopic incompressible tori.

\item In fact  $\bar K(a,b,c,d,e,f)(s)$
is a graph manifold, determined by a linear graph consisting of 5 vertices and 4 edges, where the extremal vertices represent Seifert fiber space over a disk $D^2$ and two exceptional fibers, and the internal vertices represent Seifert fibered spaces over an annulus $A^2$ with one exceptional fiber. Furthermore the Seifert invariants of the pieces are given by $(D^2;{\frac{a}{ab-1},\frac{f}{ef-1}})$, $(A^2;{\frac{1}{c}})$, $({A^2; \frac{1}{d-1}})$, $(A^2;{\frac{c}{1-cd}})$, $(D^2;{\frac{a}{ab-1},\frac{e-1}{e}})$. 
\end{enumerate}
\end{theorem}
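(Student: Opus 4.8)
The plan is to follow the same three-step strategy used for the family $\tilde K(\ell,m,n,p,q)$, adapting each step to the tangle $Q(a,b,c,d,e,f)$. I would first establish item (3), since the graph-manifold description simultaneously yields item (2), and then treat the hyperbolicity claim (1) separately. Throughout, $s$ is the integral slope supplied by the Montesinos trick, namely the slope on $\bar K$ realizing the double branched cover of the $R(1/0)$-filling.

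First I would prove (3), mirroring the earlier Proposition. By the preceding theorem the four Conway spheres $S_1,S_2,S_3,S_4$ of $Q(a,b,c,d,e,f)\cup R(1/0)$ cut the link into five tangle pieces; lifting through the double branched cover $\pi_s$ turns each Conway sphere into a separating torus $T_i=\pi_s^{-1}(S_i)$ and each tangle piece into a Seifert fibered space. The two extremal pieces are rational tangles whose lifts are Seifert fibered over a disk with two exceptional fibers, and the three interior pieces lift to Seifert fibered spaces over an annulus with one exceptional fiber. I would obtain the Seifert invariants by reading off the rational tangle in each of the five pieces of the decomposition of $Q(a,b,c,d,e,f)\cup R(1/0)$ (see Figure \ref{ConwaySpheres2}) and translating via the double branched cover, using the sign convention of the earlier Proposition; the computation should yield $(D^2;\frac{a}{ab-1},\frac{f}{ef-1})$, $(A^2;\frac{1}{c})$, $(A^2;\frac{1}{d-1})$, $(A^2;\frac{c}{1-cd})$, $(D^2;\frac{a}{ab-1},\frac{e-1}{e})$. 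To see that this linear graph is the JSJ decomposition, I would check, exactly as before, that the Seifert fibrations of adjacent pieces do not match along the common torus. The parameter restrictions $|a|\ge 2$, $|c|\ge 2$, $|d|\ge 3$, $|e|\ge 2$ together with $(a,b),(e,f)\ne\pm(2,1)$ are precisely what makes each piece a nontrivial Seifert fibered space and forbids any two adjacent fibrations from agreeing. This gives that each $T_i$ is incompressible, that the four tori are mutually non-isotopic, and that the manifold is an irreducible graph manifold, proving (3) and hence (2).

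For (1), I would repeat the argument of Theorem \ref{main} step for step. First, $\bar K$ is not a torus knot, since surgery on it produces a manifold containing separating incompressible tori. To rule out satellite knots, I need the analog of Lemma \ref{intersecting}: the core $k_s$ of the filling solid torus $\pi_s^{-1}(R(1/0))$ meets each $T_i$ in two points and cannot be isotoped off any $T_i$. The proof is identical in structure; $k_s$ lifts a crossingless arc of $R(1/0)$, its intersection with each interior piece is a pair of arcs joining distinct boundary tori, and its intersection with each extremal piece is an unknotting tunnel, so an innermost product-region argument forces any competing torus to continue to meet $k_s$. Then, assuming $\bar K$ were a satellite with companion torus $T$ bounding a solid torus $V\supset\bar K$, the torus $T$ could not survive as one of the $T_i$ after surgery, so $T$ must compress; by \cite{Gabai,Sch} the knot $\bar K$ is a $0$- or $1$-bridge braid of winding number $\omega\ge 2$ in $V$, the surgery is realized along the core of $V$ at slope $s/\omega^2$ by \cite{Gordon}, and since $\omega^2\ge 4$ this contradicts \cite{GordonLuecke1}. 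Hence $\bar K$ is hyperbolic.

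I expect the main obstacle to be item (3): not the conceptual framework, which is inherited from the five-tori case, but the bookkeeping needed to confirm the precise Seifert invariants and, above all, to verify that adjacent fibrations genuinely fail to match for \emph{all} admissible parameters. The delicate point is at the two extremal pieces, which carry the exceptional fibers $\frac{a}{ab-1},\frac{f}{ef-1}$ and $\frac{a}{ab-1},\frac{e-1}{e}$; I would check that the excluded values $(a,b),(e,f)=\pm(2,1)$ and the bound $|e|\ge 2$ are exactly the hypotheses preventing a degenerate (integral) fiber that would allow a fibration to extend across $T_1$ or $T_4$, collapsing the JSJ graph and destroying the non-isotopy of the tori.
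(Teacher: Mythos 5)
Your proposal is correct and follows essentially the same route as the paper: the paper's own proof is simply the statement that hyperbolicity follows by the argument of Theorem \ref{main} (including the analog of Lemma \ref{intersecting}) and that items (2) and (3) follow from Figure \ref{ConwaySpheres2} by reading off the rational tangles in each piece, exactly as in the earlier Proposition. Your write-up just makes explicit the details the paper leaves implicit.
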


\begin{proof} That $\bar K$ is a hyperbolic knot, follows from a similar argument to that of Theorem \ref{main}. The rest of the proof follows from Figure \ref{ConwaySpheres2}. \end{proof}

\section{Properties of the examples}

The knots $\tilde K(\ell,m,n,p,q)$ and $\bar K(a,b,c,d,e,f)$ have some properties that can be deduced from Figures \ref{ovillo}, \ref{ConwaySpheres}, \ref{tangle} and \ref{ConwaySpheres2}. We explore two constructions.

Look at each of the Conway spheres $C_i$ of Figure \ref{ConwaySpheres} but as lying in $(S^3,U)=B(\ell,m,n,p,q)\cup R(0)$. Note that each sphere intersects $U$ in six points, determining then a $3$-string tangle decomposition of $U$.  Let $\tilde C_i$ be the lifting of $C_i$ in the double cover of $S^3$ branched along $U$, which is the $3$-sphere. Each $\tilde C_i$ is a genus two surface embedded in $S^3$, and it follows that the knot $\tilde K=\tilde K(\ell,m,n,p,q)$ lies over each of the surfaces $\tilde C_i$, it is a non-separating curve there, and the slope in $N(\tilde K)$ determined by the embedding in $\tilde C_i$, is precisely the exceptional slope $r$. Note that $\tilde C_1$ separates $U$ into a $3$-string trivial tangle and a non-trivial tangle. This implies that $\tilde C_1$ bounds a genus two handlebody $H_1$ in $S^3$, which is knotted, that is, the complement of $H_1$ is not a handlebody. In the same way we see that $\tilde C_2$ and $\tilde C_3$ bound knotted handlebodies $H_2$ and $H_3$, such that $H_1 \subset H_2 \subset H_3$. Analogously, $\tilde C_4$ and $\tilde C_5$ bound handlebodies $H_4$ and $H_5$ such that $H_5\subset H_4$. But note that $H_3$ and $H_4$ are disjoint, they form a kind of handlebody link.
Note also that $H_2$ is obtained from $H_1$ by an annulus interchange, that is, there is an annulus $A_1$ in the exterior of $H_1$, whose boundary lies in $\tilde C_1$, such that $(S_1 \backslash N(\partial A_1)) \cup (\partial N(A_1) \backslash N(\partial A_1))$ is the surface $\tilde C_2$. Similarly, $\tilde C_3$ is obtained from $\tilde C_2$ by an annulus interchange, and also $\tilde C_4$ is obtained from $\tilde C_3$ and $\tilde C_5$ is obtained from $\tilde C_4$ by an annulus interchange.

We can estimate the tunnel number of $\tilde K(\ell,m,n,p,q)$, $\mathrm{tn}(\tilde K)$. A tangle $(B,t)$ has a $k$-bridge decomposition if there is a sphere $S$ in $B$ meeting the strings of the tangle in $2k$ points, and decomposing $(B,t)$ into two parts, one being a $k$-string trivial tangle and the other a $k$-string trivial tangle relative to $\partial B$, that is, the tangle consists of $(\partial B \times I , \partial t \times I)$, union a collection of $2k-2$ arcs isotopic to disjoint arcs lying in $S$ (see Figure 9 in \cite{EM2}).
It is not so difficult to see that $B(\ell,m,n,p,q,r)$ admits a $5$-bridge decomposition. This implies that $\mathrm{tn}(\tilde K)\leq 3$. On the other hand $\tilde K(r)$ is an irreducible manifold containing five incompressible, disjoint, non parallel tori. By a result of Kobayashi \cite{Ko}, the Heegaard genus of $\tilde K(r)$ cannot be two, for manifolds with Heegaard genus two can have at most two incompressible disjoint tori. So $\tilde K$ cannot be a tunnel number one knot, then $2\leq \mathrm{tn}(\tilde K) \leq 3$.

A similar construction can be done for the knots $\bar K(a,b,c,d,e,f)$. 
Look at each of the Conway spheres $S_i$ of Figure \ref{ConwaySpheres2} but as lying in $(S^3,U)=Q(a,b,c,d,e,f)\cup R(0)$. Note that each sphere intersects $U$ in six points, determining then a $3$-string tangle decomposition of $U$.  Let $\bar S_i$ be the lifting of $S_i$ in the double cover of $S^3$ branched along $U$, which is the $3$-sphere. Each $\bar S_i$ is a genus two surface embedded in $S^3$, and it follows that the knot $\bar K=\bar K(a,b,c,d,e,f)$ lies over each of the surfaces $\bar S_i$, it is a non-separating curve there, and the slope in $N(\bar K)$ determined by the embedding in $\bar S_i$, is precisely the exceptional slope $s$. Note that $S_1$ separates {$U$} into a $3$-string trivial tangle and a non-trivial tangle. This implies that $\bar S_1$ bounds a genus two handlebody $\bar H_1$ in $S^3$, which is knotted, that is, the complement of $H_1$ is not a handlebody. In the same way we see that $\bar S_2$ bounds a knotted handlebody $\bar H_2$, and $\bar S_3$ bounds a handlebody $H_3$, such that $H_1 \subset H_2 \subset H_3$. But in this case $H_3$ is unknotted, that is, $\bar S_3$ bounds another handlebody $\bar H_4$, and $S_4$ bounds a knotted handlebody $H_5$, such that $H_5\subset H_4$. That is, $\bar K$ lies on the boundary of a genus two handlebody standardly embedded in $S^3$.
Again, $\mathrm{tn}(\bar K)$ cannot be one by \cite{Ko}, then $\mathrm{tn}(\bar K)=2$.

By taking some forbidden values of the parameter defining $\tilde K$ or $\bar K$, it is possible to find examples of hyperbolic knots having a surgery producing a graph manifod which have two or three incompressible tori. In particular the knots $\bar K(a,b,c,-2,-1,1)$ have a surgery producing a manifold with two incompressible tori.
It is not diffucult to show that the tangle $Q(a,b,c,d,e,f)$ has a 3-bridge decomposition, and then $\mathrm{tn}(\bar K(a,b,c,-2,-1,1))=1$. These knots admit the maximal number of tori that can appear by surgery in a tunnel number one knot.

The knots constructed in this paper are somehow related to the known examples of knots that admit several genus one Seifert surfaces \cite{EGR}, \cite{EGMR}, \cite{Valdez}. Look at the trivial knot $(S^3,U)= B(\ell,m,n,p,q)\cup R(0)$ considered before. Note that there is a trivial knot $L$ in its complement
which bounds 6 disjoint non-parallel disks $D_1$, $D_2$, $D_3$, $D_4$, $D_5$, $D_6$, each intersecting $K$ in 3 points, as shown in Figure \ref{seisdiscos}. Let $D_1$ be the orange disk bounded by $L$ as shown in Figure \ref{seisdiscos}.

\begin{figure}
    \centering
    \includegraphics[width=7cm]{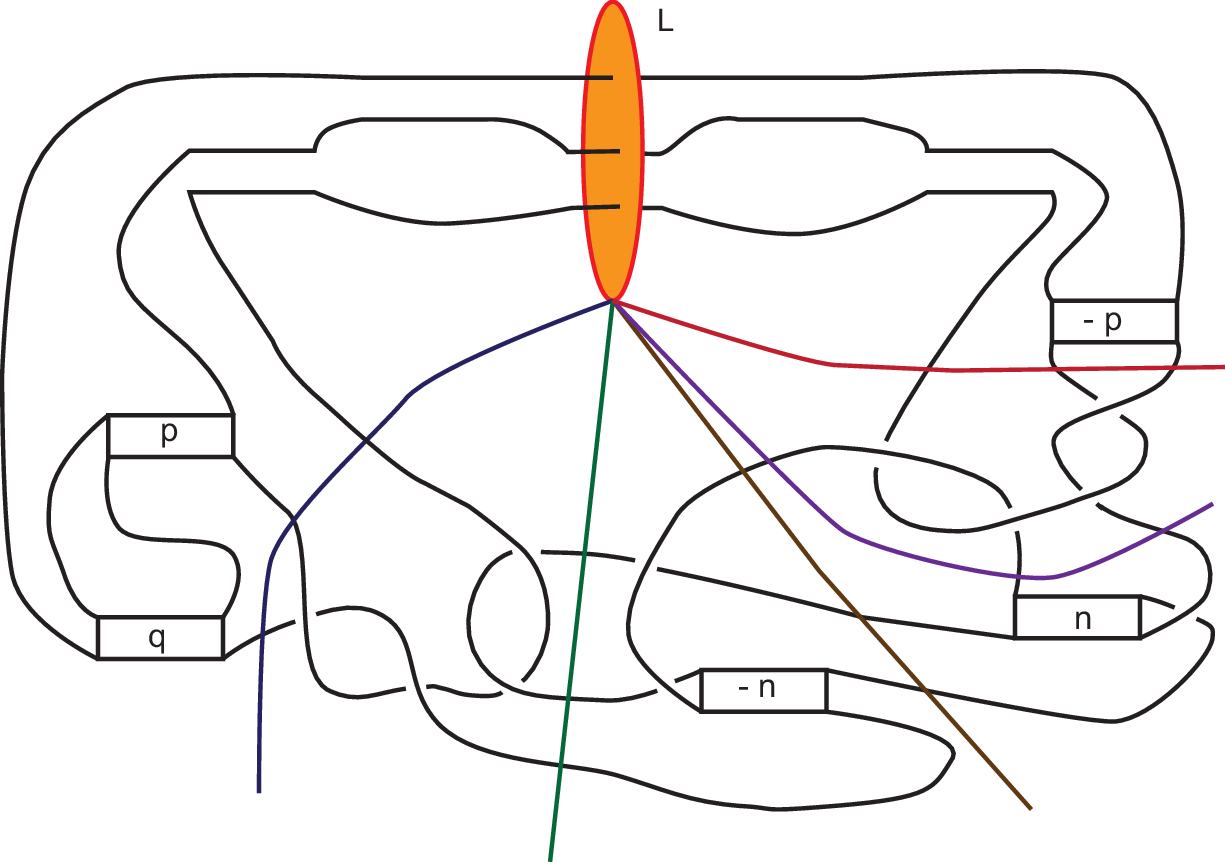}
    \caption{The link $U\cup L$}
    \label{seisdiscos}
    \end{figure}

As before, let $\pi\colon S^3 \rightarrow S^3$ be the double cover of $S^3$ branched along $U$, which is the 3-sphere because $U$ is the trivial knot. 
Let $\tilde L =\pi^{-1}(L)$ be the lift of the knot $L$, this is a knot in $S^3$, because the linking number between $U$ and $L$ is odd. Let $T_i = \pi^{-1}(D_i)$, for $i=1,2,3,4,5,6$. These are genus one Seifert surfaces bounded by $\tilde L$. That is, $\tilde L$ bounds $6$ disjoint, non-isotopic genus one Seifert surfaces. 
However $\tilde L$ is not a hyperbolic knot, but it is a satellite knot. This is because there is a Conway sphere in $U\cup L$ disjoint from $L$, and this sphere lifts to an incompressible torus in the exterior of $\tilde L$. Note that this Conway sphere intersects the disk $D_1$, and it is disjoint from the other disks. 

By using a variation of the tangle $B(\ell,m,n,p,q)$, hyperbolic knots having 5 disjoint genus one Seifert surfaces can be constructed, see \cite{EGMR}. Presumably, this family of knots coincides with the family of knots having five genus one Seifert surfaces constructed in \cite{Valdez}.

We note that the knot $\tilde K(\ell,m,n,p,q)$ can be recovered from $\tilde L$.
Note that the satellite torus of $\tilde L$ intersect the Seifert surface $T_1$ and it is disjoint from the other Seifert surfaces.
Take a complicated knot $K^*$ lying in $T_1$, which intersects the satellite torus. $T_1^*=T_1\backslash N(K^*)$ is a pair of pants, with two boundary components lying in $N(K^*)$, say with slope $r$. 
Do Dehn surgery along the slope $r$. After capping off $T_1^*$ with meridian disks of the surgered solid torus,
it becomes a disk, and the union of this disk and each of the $T_i$, $2 \leq i\leq 6$, is an incompressible torus in the surgered manifold.

\end{document}